\theoremstyle{plain}
\newtheorem{theorem}{Theorem}
\newtheorem{lemma}{Lemma}
\newtheorem{corollary}{Corollary}
\newtheorem{proposition}{Proposition}
\theoremstyle{definition}
\newtheorem{example}{Example}
\theoremstyle{remark}
\newtheorem{remark}{Remark}
\numberwithin{equation}{section}
\begin{document}
\title[Biharmonic Curves in $I\times _{f}M^{n}\left( c\right) $]{Biharmonic
Curves in Warped Product Manifolds $I\times _{f}M^{n}\left( c\right) $}
\author{\c{S}aban G\"{u}ven\c{c}}
\address[\c{S}. G\"{u}ven\c{c}]{Department of Mathematics, Balikesir
University, 10145, Balikesir, T\"{u}rkiye}
\email[\c{S}. G\"{u}ven\c{c}]{sguvenc@balikesir.edu.tr}
\author{Cihan \"{O}zg\"{u}r}
\address[C. \"{O}zg\"{u}r]{Department of Mathematics, \.{I}zmir Democracy
University, 35140, \.{I}zmir, T\"{u}rkiye}
\email[C.~\"{O}zg\"{u}r]{cihan.ozgur@idu.edu.tr}
\subjclass[2020]{Primary 53C25; Secondary 53C40, 53A04.}
\keywords{Biharmonic curve, warped product manifold, space form.}

\begin{abstract}
We explore the geometric properties of biharmonic curves in warped product
manifolds of the form $I\times _{f}M^{n}(c)$, where $I$ is an open interval
and $M^{n}(c)$ is a space of constant curvature. By establishing a main
theorem, we analyze four distinct cases to reveal deeper curvature-related
characteristics of these curves, including situations where they are slant.
Finally, we construct two examples in $I\times _{f}S^{2}(1)$.
\end{abstract}

\maketitle

\section{Introduction}

Let $f :(M,g)\rightarrow (\widetilde{M},\widetilde{g})$ be a smooth map
from a Riemannian manifold $(M,g)$ into an $m$-dimensional Riemannian
manifold $(\widetilde{M},\widetilde{g}).\ f$ is called a \textit{biharmonic
map} if
\begin{equation*}
\tau _{2}(f)=-J^{f}(\tau (f))=-m\left( \Delta ^{f}H-traceR^{\widetilde{M}%
}(df,\tau (f))df\right) =0,
\end{equation*}%
where $J^{f}$ is the Jacobi operator of $f$, $R^{\widetilde{M}}$ denotes the
curvature tensor of $(\widetilde{M},\widetilde{g})$ and $\tau _{2}(f)$ is
called the \textit{bitension field of }$f$ \cite{Jiang-86}.

It is easy to see that if $(\widetilde{M}(c),\widetilde{g})$ is an an $m$%
-dimensional Riemannian space form of constant curvature $c$, then
\begin{equation*}
\tau _{2}(f)=-m\Delta ^{f}H+cm^{2}H.
\end{equation*}%
Thus, $f$ is biharmonic if and only if%
\begin{equation*}
\Delta ^{f}H=cmH,
\end{equation*}%
where $H$ is the mean curvature vector field of $M$. In a different setting,
in \cite{chen-91}, B.-Y. Chen defined a biharmonic submanifold $M\subset
\mathbb{E}^{n}$ of the Euclidean space as its mean curvature vector field $H$
satisfies $\Delta H=0$, where $\Delta $ is the Laplacian (see also \cite%
{chen2014, Chen-2025}). So Jiang's and Chen's definitions on biharmonicity
coincide if the ambient space is the Euclidean space (see \cite{CMC-2002}).

After Jiang's definition, many results have been proven on biharmonic
submanifolds in different ambient spaces. See, for example, \cite{AGR-2013,
BMO-2008, caddeo, Chen-2025, MO-2006, Ou-2010}, and the references therein.

Biharmonic curves have been widely studied in various geometric settings due
to their intrinsic mathematical significance and potential applications in
physics and engineering. See, for example, \cite{chen2014, eellslemaire, eellslemairek},
and the references therein.

Warped product structures have been extensively studied in differential
geometry as well as in physics (see \cite{oneill}). The classification of
biharmonic curves includes cases where they exhibit slant characteristics,
which have been previously studied in different contexts \cite%
{CalinCrasmareanu, CIL-2006, Fetcu, Fetcu-Onic}. Motivated by these
studies, in the present paper, we investigate biharmonic curves in a
specific class of warped product manifolds of the form $I\times _{f}M^{n}(c)$%
, where $I$ is an open interval, $M^{n}(c)$ is an $n$-dimensional space of
constant curvature $c$. We derive the biharmonic equation for curves in $%
I\times _{f}M^{n}(c)$, solve the resulting differential equations, and
analyze the geometric properties of the solutions. Finally, we construct two
explicit examples within $I\times _{f}S^{2}(1)$, supporting our theoretical
results.

This paper is organized as follows: In Section $2$, we provide preliminary
definitions and fundamental results necessary for our analysis. Section $3$
derives the biharmonic equation for curves in $I\times _{f}M^{n}(c)$ and
establishes our main theorem, analyzing four distinct cases. Section $4$
presents two explicit examples supporting our results. In Section $5$, we
present the conclusion of our study along with a discussion on the potential
future directions of research.
\section{Biharmonic Curves in $I\times _{f}M^{n}\left( c\right) $}
Let $M^{n}\left( c\right) =\left( M,g\right) $ denote the $n$-dimensional
simply connected space form of constant curvature $c$ where $g$ is the
Riemannian metric. Let $I\subset
\mathbb{R}
$ be an open interval and $f:I\rightarrow
\mathbb{R}
$ be a smooth function where $f\left( t\right) \neq 0$ for all $t\in I$. Let
us consider
\begin{equation*}
\widetilde{M}^{n+1}=I\times _{f}M^{n}\left( c\right) =\left( I\times M,%
\widetilde{g}\right)
\end{equation*}%
where
\begin{equation*}
\widetilde{g}=dt^{2}+f^{2}g
\end{equation*}%
is the Riemannian metric of the warped product manifold and $f$ is the
warping function. Notice that we selected the Euclidean metric $dt^{2}$ of
the base interval $I$. Here $t$ denotes the coordinate function of $I$.

\begin{remark}
In a Riemannian manifold context, let us consider an open interval $I$ with
a Riemannian metric $g_{B}$. If we take $g_{B}$ as the Euclidean metric, we
lose no generality, because every Riemannian metric on a one-dimensional
manifold can be transformed into the Euclidean metric via reparametrization.
On a one-dimensional manifold, any Riemannian metric can be written as
\begin{equation*}
g_{B}=\lambda (x)dx^{2},
\end{equation*}%
where $\lambda (x)$ is a positive continuous function of the coordinate
function $x$. However, defining a new parameter
\begin{equation*}
t=\int \sqrt{\lambda (x)}\,dx,
\end{equation*}%
in terms of this new coordinate, the metric takes the standard Euclidean
form $g_{B}=dt^{2}$. Thus, for an open interval $I$, taking the Euclidean
metric does not result in any loss of generality, as any other metric can be
reduced to it through a suitable coordinate change (see \cite{JohnMLee}).
\end{remark}

Now, let us denote the unit vector field tangent to $I$ by $\partial _{t}=%
\frac{\partial }{\partial t}$ and its dual by $\eta $, that is,
\begin{equation*}
\eta \left( X\right) =\widetilde{g}\left( X,\partial _{t}\right) ,\text{ }%
\forall X\in \chi \left( \widetilde{M}^{n+1}\right) .
\end{equation*}%
Any $X\in \chi \left( \widetilde{M}^{n+1}\right) $ can be written as the sum
of two components $X_{1}\in \chi \left( I\right) $ and $X_{2}\in \chi \left(
M\right) ,$ which are called the \textit{horizontal} and the \textit{vertical%
} part of $X$, respectively. As a result, we have%
\begin{equation*}
\widetilde{g}\left( X,Y\right) =X_{1}Y_{1}+f^{2}g\left( X_{2},Y_{2}\right) ,
\end{equation*}%
for $X,Y\in \chi \left( \widetilde{M}^{n+1}\right) ,$ $X=X_{1}+X_{2}$ and $%
Y=Y_{1}+Y_{2}.$ Notice that $\widetilde{g}\left( X_{1},Y_{2}\right) =0$ and $%
\widetilde{g}\left( X_{2},Y_{1}\right) =0.$ The Riemannian curvature tensor
of $\widetilde{M}^{n+1}$ is obtained as%
\begin{eqnarray*}
\widetilde{R}\left( X,Y\right) Z &=&\left[ \left( \frac{f^{\prime }}{f}%
\right) ^{2}-\frac{c}{f^{2}}\right] \left[ \widetilde{g}\left( X,Z\right) Y-%
\widetilde{g}\left( Y,Z\right) X\right] \\
&&+\left[ \frac{f^{\prime \prime }}{f}-\left( \frac{f^{\prime }}{f}\right)
^{2}+\frac{c}{f^{2}}\right] \left[ \widetilde{g}\left( X,Z\right) \eta
\left( Y\right) \partial _{t}-\widetilde{g}\left( Y,Z\right) \eta \left(
X\right) \partial _{t}\right. \\
&&\left. -\eta \left( Y\right) \eta \left( Z\right) X+\eta \left( X\right)
\eta \left( Z\right) Y\right] ,
\end{eqnarray*}%
(see \cite{DDVV5}). We will use the short notation%
\begin{equation*}
f^{\prime }=f^{\prime }\left( t\right) =\frac{d}{dt}f\left( t\right)
\end{equation*}%
and%
\begin{equation*}
\gamma _{i}^{\prime }=\gamma _{i}^{\prime }\left( s\right) =\frac{d}{ds}%
\gamma _{i}\left( s\right) ,\text{ }\left( i=0,1,2,...,n\right)
\end{equation*}%
throughout the paper. Here $t$ denotes the coordinate function of $I$ and $s$
denotes the arc-length parameter of a given curve $\gamma =\gamma \left(
s\right) .$ Also, note that we will write
\begin{equation*}
\theta ^{\prime }=\frac{d}{ds}\theta \left( s\right) ,
\end{equation*}%
where $\theta $ is an angle function along the curve.

Let $\gamma :J\rightarrow \widetilde{M}^{n+1}$ be a unit-speed smooth curve
with arc-length parameter $s\in J$. Then there exists functions $\gamma
_{0}:J\rightarrow I,$ $\gamma _{i}:J\rightarrow
\mathbb{R}
,$ $i=1,...,n$, so that $\left( f\circ \gamma \right) \left( s\right)
=f\left( \gamma _{0}\left( s\right) \right) $ is well-defined and $\gamma $
can be written as%
\begin{equation*}
\gamma \left( s\right) =\left( \gamma _{0}\left( s\right) ,\gamma _{1}\left(
s\right) ,...,\gamma _{n}\left( s\right) \right) .
\end{equation*}%
If we denote the coordinate functions of $\widetilde{M}^{n+1}$ by $\left\{
t,x_{1},x_{2},...,x_{n}\right\} ,$ then we can write
\begin{equation*}
t\circ \gamma =\gamma _{0},\text{ }x_{i}\circ \gamma =\gamma _{i},\text{ }%
i=1,...,n.
\end{equation*}%
The tangent vector field of $\gamma $ is
\begin{equation*}
T\left( s\right) =\gamma ^{\prime }\left( s\right) =\left( \gamma
_{0}^{\prime }\left( s\right) ,\gamma _{1}^{\prime }\left( s\right)
,...,\gamma _{n}^{\prime }\left( s\right) \right) ,
\end{equation*}%
which is equivalent to%
\begin{equation*}
T=\gamma _{0}^{\prime }\partial _{t}+\sum_{i=1}^{n}\gamma _{i}^{\prime
}\partial _{x_{i}},
\end{equation*}%
where $\partial _{x_{i}}=\frac{\partial }{\partial x_{i}}.$ Let us denote
the vertical part of $T$ with $W=T-\gamma _{0}^{\prime }\partial _{t}.$
Thus, $\widetilde{g}\left( T,T\right) =1$ gives us
\begin{equation*}
\left( \gamma _{0}^{\prime }\right) ^{2}+f^{2}\left( \gamma _{0}\right)
g\left( W,W\right) =1.
\end{equation*}%
Also notice that $\gamma _{0}^{\prime }=\eta \left( T\right) ,$ since $%
\partial _{t}$ is horizontal and $\partial _{x_{i}}$, $i=1,2,...,n$ are
vertical. We can write
\begin{equation*}
\gamma _{0}^{\prime }=\eta \left( T\right) =\cos \theta ,
\end{equation*}%
where $\theta $ is the angle function between $T$ and $\partial _{t},$ which
is called the \textit{structural angle} \cite{CalinCrasmareanu}. If the
structural angle is constant, we call $\gamma $ a \textit{slant curve} (see
\cite{CalinCrasmareanu}, \cite{CIL-2006} and \cite{Dursun}). If the structural angle vanishes, we
call $\gamma $ a \textit{Legendre curve} \cite{BBlair}, that is, $\eta
\left( T\right) =0$. Note that, if $\gamma $ is an integral curve of $T=\pm
\partial _{t}$, i.e., $\eta \left( T\right) =\cos \theta =\pm 1,$ then it is
a geodesic from the fact that $\widetilde{\nabla }_{T}T=\widetilde{\nabla }%
_{\partial _{t}}\partial _{t}=0.$ In this case, these integral curves of $%
T=\pm \partial _{t}$ are simply the geodesics given by%
\begin{equation*}
\gamma \left( s\right) =\left( \pm s+c_{0},c_{1},...,c_{n}\right) ,
\end{equation*}%
where $c_{i},$ $i=0,1,...,n$ are arbitrary constants. From the definition of
the structural angle, we find%
\begin{equation*}
\gamma _{0}\left( s\right) =\int \cos \theta \left( s\right) ds.
\end{equation*}%
In case $\gamma $ is slant, it is obvious that
\begin{equation}
\gamma _{0}\left( s\right) =s\cos \theta +c_{0}  \label{eq2}
\end{equation}%
for some arbitrary constant $c_{0}$.

If $\gamma :J\rightarrow \widetilde{M}^{n+1}$ is a Frenet curve of
osculating order $r\leq n+1$, one can obtain%
\begin{eqnarray*}
\widetilde{\nabla }_{T}\widetilde{\nabla }_{T}\widetilde{\nabla }_{T}T
&=&-3k_{1}k_{1}^{\prime }T+\left( k_{1}^{\prime \prime
}-k_{1}^{3}-k_{1}k_{2}^{2}\right) E_{2} \\
&&+\left( 2k_{1}^{\prime }k_{2}+k_{1}k_{2}^{\prime }\right)
E_{3}+k_{1}k_{2}k_{3}E_{4},
\end{eqnarray*}%
\begin{eqnarray*}
\widetilde{R}\left( E_{2},T\right) T &=&\left. \left[ \frac{c}{f^{2}}-\left(
\frac{f^{\prime }}{f}\right) ^{2}-\left( \frac{f^{\prime \prime }}{f}-\left(
\frac{f^{\prime }}{f}\right) ^{2}+\frac{c}{f^{2}}\right) \eta \left(
T\right) ^{2}\right] \right\vert _{\gamma }E_{2} \\
&&+\left. \left( \frac{f^{\prime \prime }}{f}-\left( \frac{f^{\prime }}{f}%
\right) ^{2}+\frac{c}{f^{2}}\right) \right\vert _{\gamma }\left[ \eta \left(
T\right) \eta \left( E_{2}\right) T-\eta \left( E_{2}\right) \partial _{t}%
\right] .
\end{eqnarray*}%
As a result, we calculate%
\begin{equation*}
\tau _{2}\left( \gamma \right) =\widetilde{\nabla }_{T}\widetilde{\nabla }%
_{T}\widetilde{\nabla }_{T}T+\widetilde{R}\left( \widetilde{\nabla }%
_{T}T,T\right) T
\end{equation*}%
\begin{equation*}
=\left\{ -3k_{1}k_{1}^{\prime }+k_{1}\left. \left( \frac{f^{\prime \prime }}{%
f}-\left( \frac{f^{\prime }}{f}\right) ^{2}+\frac{c}{f^{2}}\right)
\right\vert _{\gamma }\eta \left( T\right) \eta \left( E_{2}\right) \right\}
T
\end{equation*}%
\begin{equation*}
+\left\{ k_{1}^{\prime \prime }-k_{1}^{3}-k_{1}k_{2}^{2}+k_{1}\left. \left[
\frac{c}{f^{2}}-\left( \frac{f^{\prime }}{f}\right) ^{2}-\left( \frac{%
f^{\prime \prime }}{f}-\left( \frac{f^{\prime }}{f}\right) ^{2}+\frac{c}{%
f^{2}}\right) \eta \left( T\right) ^{2}\right] \right\vert _{\gamma
}\right\} E_{2}
\end{equation*}%
\begin{equation}
+\left( 2k_{1}^{\prime }k_{2}+k_{1}k_{2}^{\prime }\right)
E_{3}+k_{1}k_{2}k_{3}E_{4}-k_{1}\left. \left( \frac{f^{\prime \prime }}{f}%
-\left( \frac{f^{\prime }}{f}\right) ^{2}+\frac{c}{f^{2}}\right) \right\vert
_{\gamma }\eta \left( E_{2}\right) \partial _{t}.  \label{tau2}
\end{equation}%
Now we have our first result:

\begin{proposition}
If $\gamma :J\rightarrow \widetilde{M}^{n+1}$ is a unit-speed biharmonic
curve, then $k_{1}$ is a constant.
\end{proposition}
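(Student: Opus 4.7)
The plan is to read off the tangential component of the biharmonic equation $\tau_{2}(\gamma)=0$ from the displayed expression for $\tau_{2}(\gamma)$. Since the formula gives $\tau_{2}(\gamma)$ as a sum of terms along $T$, $E_{2}$, $E_{3}$, $E_{4}$, and $\partial_{t}$, and since $\partial_{t}$ has a nonzero projection onto $T$ (namely $\eta(T)T$), I would first take the inner product $\widetilde{g}(\tau_{2}(\gamma),T)$ and use $\widetilde{g}(\partial_{t},T)=\eta(T)$.

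Writing $A = \left.\!\left(\tfrac{f''}{f} - \left(\tfrac{f'}{f}\right)^{2} + \tfrac{c}{f^{2}}\right)\right|_{\gamma}$ for brevity, the $T$-coefficient from the explicit $T$-term is $-3k_{1}k_{1}' + k_{1}A\,\eta(T)\eta(E_{2})$, while the last term $-k_{1}A\,\eta(E_{2})\partial_{t}$ contributes $-k_{1}A\,\eta(E_{2})\eta(T)$ to the tangential component. These two curvature-dependent contributions cancel exactly, leaving
\[
\widetilde{g}(\tau_{2}(\gamma),T) \;=\; -3k_{1}k_{1}'.
\]
Setting this equal to zero yields $k_{1}k_{1}'=0$.

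Since $\gamma$ is a Frenet curve of osculating order $r\geq 1$, either $k_{1}\equiv 0$ (in which case $\gamma$ is a geodesic and the claim is trivial), or $k_{1}>0$ along $\gamma$, in which case $k_{1}'=0$ so $k_{1}$ is constant. In both cases the assertion follows.

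I do not expect any real obstacle here: the whole argument is essentially algebraic, reducing to the observation that the curvature-sensitive terms proportional to $A$ have been arranged so that they disappear from the tangential projection, exactly as in the classical Euclidean or space-form computation where the tangential part of $\tau_{2}(\gamma)$ always returns $-3k_{1}k_{1}'$. The only subtlety worth flagging explicitly is the decomposition of the $\partial_{t}$ term into Frenet components, since $\partial_{t}$ is not itself part of the Frenet frame; handling this carefully is what makes the cancellation transparent.
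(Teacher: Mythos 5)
Your proof is correct and takes essentially the same route as the paper: project $\tau_{2}(\gamma)=0$ onto $T$ using equation (\ref{tau2}) to obtain $-3k_{1}k_{1}'=0$, hence $k_{1}$ is constant whether or not it vanishes. The only difference is that you make explicit the cancellation between the term $k_{1}A\,\eta(T)\eta(E_{2})$ in the $T$-component and the contribution $-k_{1}A\,\eta(E_{2})\eta(T)$ coming from the projection of the $\partial_{t}$-term onto $T$ --- a detail the paper leaves implicit when it says ``apply $T$.''
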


\begin{proof}
If $\gamma $ is biharmonic, then $\tau _{2}\left( \gamma \right) =0.$ If we
use equation (\ref{tau2}) and apply $T,$ we find%
\begin{equation*}
-3k_{1}k_{1}^{\prime }=0.
\end{equation*}%
Then, $k_{1}=0$ or $k_{1}^{\prime }=0.$ In both cases, $k_{1}$ is a constant.
\end{proof}

\begin{remark}
If $\gamma $ is a geodesic, (i.e. $k_{1}=0$), then $\tau _{2}\left( \gamma
\right) $ vanishes, so $\gamma $ is obviously biharmonic. Thus, we need to
focus on non-geodesic biharmonic curves.
\end{remark}

\begin{lemma}
Let $\gamma :J\rightarrow \widetilde{M}^{n+1}$ be a non-geodesic unit-speed
Frenet curve of osculating order $r$. Then $\gamma $ is biharmonic if and
only if $k_{1}$ is a constant and%
\begin{equation*}
\left. \left( \frac{f^{\prime \prime }}{f}-\left( \frac{f^{\prime }}{f}%
\right) ^{2}+\frac{c}{f^{2}}\right) \right\vert _{\gamma }\eta \left(
T\right) \eta \left( E_{2}\right) T
\end{equation*}%
\begin{equation*}
\left\{ -k_{1}^{2}-k_{2}^{2}+\left. \left[ \frac{c}{f^{2}}-\left( \frac{%
f^{\prime }}{f}\right) ^{2}-\left( \frac{f^{\prime \prime }}{f}-\left( \frac{%
f^{\prime }}{f}\right) ^{2}+\frac{c}{f^{2}}\right) \eta \left( T\right) ^{2}%
\right] \right\vert _{\gamma }\right\} E_{2}
\end{equation*}%
\begin{equation*}
+k_{2}^{\prime }E_{3}+k_{2}k_{3}E_{4}-\left. \left( \frac{f^{\prime \prime }%
}{f}-\left( \frac{f^{\prime }}{f}\right) ^{2}+\frac{c}{f^{2}}\right)
\right\vert _{\gamma }\eta \left( E_{2}\right) \partial _{t}=0.
\end{equation*}
\end{lemma}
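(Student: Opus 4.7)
The plan is to derive the displayed identity directly from formula (\ref{tau2}) by imposing the biharmonicity condition $\tau_2(\gamma) = 0$ and invoking Proposition 1. The lemma is essentially a repackaging of (\ref{tau2}) once one knows that $k_1$ is a nonzero constant.

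For the forward implication, I would first apply Proposition 1 to conclude that $k_1$ is constant, so $k_1' = 0$ and $k_1'' = 0$; the non-geodesic hypothesis ensures $k_1 \neq 0$. Substituting these vanishings into (\ref{tau2}) eliminates the $-3k_1 k_1'$ term from the $T$-coefficient, removes the $k_1''$ term from the $E_2$-coefficient, and collapses the $E_3$-coefficient $2k_1' k_2 + k_1 k_2'$ to $k_1 k_2'$. Each surviving term then carries a common factor of $k_1$, so dividing through by $k_1$ produces exactly the displayed equation. The converse is immediate: given that $k_1$ is a nonzero constant, multiplying the stated equation by $k_1$ reconstructs $\tau_2(\gamma) = 0$ from (\ref{tau2}), which is the definition of biharmonicity.

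There is no real obstacle here; the argument is pure algebraic bookkeeping. The only subtleties worth stating explicitly are that the non-geodesic hypothesis is precisely what licenses the division by $k_1$, and that Proposition 1 is what allows one to discard the $k_1'$ and $k_1''$ contributions in (\ref{tau2}) and thereby reduce it to the simpler form appearing in the statement.
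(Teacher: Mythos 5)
Your argument is correct and is exactly the intended one: the paper omits an explicit proof of this lemma, but its derivation (as with Theorem \ref{maintheorem}) is precisely the substitution of $k_{1}'=k_{1}''=0$ from Proposition 1 into equation (\ref{tau2}) followed by division by the nonzero constant $k_{1}$. Your remarks on where the non-geodesic hypothesis and Proposition 1 are used are accurate, and the converse direction is handled correctly.
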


Now, we can state our main theorem:

\begin{theorem}
\label{maintheorem}Let $\gamma :J\rightarrow \widetilde{M}^{n+1}$ be a
unit-speed Frenet curve of osculating order $r$ and let $m=min\left\{
r,4\right\} $. Then $\gamma $ is biharmonic if and only

$i)$ $\left. \left[ \frac{f^{\prime \prime }}{f}-\left( \frac{f^{\prime }}{f}%
\right) ^{2}+\frac{c}{f^{2}}\right] \right\vert _{\gamma }=0$ or $\eta
\left( E_{2}\right) =0$ or $\partial _{t}\in sp\left\{
T,E_{2},E_{3},E_{4}\right\} ;$ and

$ii)$ the first $m$ of the following equations are satisfied along the curve
$\gamma $ (replacing $k_{m}=0$):%
\begin{equation*}
k_{1}=constant,
\end{equation*}%
\begin{equation*}
k_{1}^{2}+k_{2}^{2}=\left. \frac{c}{f^{2}}\right\vert _{\gamma }-\left(
\left. \frac{f^{\prime }}{f}\right\vert _{\gamma }\right) ^{2}-\left. \left(
\frac{f^{\prime \prime }}{f}-\left( \frac{f^{\prime }}{f}\right) ^{2}+\frac{c%
}{f^{2}}\right) \right\vert _{\gamma }\left[ \eta \left( T\right) ^{2}+\eta
\left( E_{2}\right) ^{2}\right] ,
\end{equation*}%
\begin{equation*}
k_{2}^{\prime }=\left. \left( \frac{f^{\prime \prime }}{f}-\left( \frac{%
f^{\prime }}{f}\right) ^{2}+\frac{c}{f^{2}}\right) \right\vert _{\gamma
}\eta \left( E_{2}\right) \eta \left( E_{3}\right) ,
\end{equation*}%
\begin{equation*}
k_{2}k_{3}=\left. \left( \frac{f^{\prime \prime }}{f}-\left( \frac{f^{\prime
}}{f}\right) ^{2}+\frac{c}{f^{2}}\right) \right\vert _{\gamma }\eta \left(
E_{2}\right) \eta \left( E_{4}\right) .
\end{equation*}
\end{theorem}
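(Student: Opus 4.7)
The plan is to take the preceding Lemma as the starting point: biharmonicity is already equivalent to $k_1$ being constant together with a single vector equation involving $T$, $E_2$, $E_3$, $E_4$, and $\partial_t$. Abbreviating
\[
\Psi := \frac{f''}{f} - \left(\frac{f'}{f}\right)^2 + \frac{c}{f^2}
\]
evaluated along $\gamma$, my aim is to convert this vector identity into scalar conditions by decomposing $\partial_t$ in an orthonormal basis that extends the Frenet frame $\{T, E_2, \ldots, E_r\}$.

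First I would write
\[
\partial_t = \eta(T)\,T + \sum_{i=2}^{r} \eta(E_i)\,E_i + P,
\]
where $P$ is the projection of $\partial_t$ onto the orthogonal complement of the osculating space $sp\{T, E_2, \ldots, E_r\}$, and substitute this into the vector equation of the Lemma. A direct cancellation then shows that the $T$-component of the resulting expression vanishes identically: the term $\Psi\,\eta(T)\,\eta(E_2)\,T$ of the Lemma is killed by the contribution $-\Psi\,\eta(E_2)\cdot\eta(T)\,T$ coming from the expansion of $\partial_t$. This explains why the list (ii) contains no equation coming from the $T$-direction; the first equation there, $k_1=\mathrm{constant}$, is already supplied by the Lemma itself.

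Next I would read off the remaining coefficients. Along $E_2$, $E_3$ and $E_4$ (whenever they exist) the surviving coefficients yield precisely the remaining three scalar equations of (ii). Along each $E_i$ with $i\geq 5$ I obtain the constraint $\Psi\,\eta(E_2)\,\eta(E_i)=0$, and along the orthogonal direction $P$ I obtain $\Psi\,\eta(E_2)\,P=0$. Taken together, these ``extra'' conditions are satisfied if and only if $\Psi=0$, or $\eta(E_2)=0$, or all higher $\eta(E_i)$ together with $P$ vanish, i.e.\ $\partial_t\in sp\{T,E_2,E_3,E_4\}$; this is precisely condition (i). The convention ``replacing $k_m=0$'' handles small osculating order uniformly: when $r<4$, the absent Frenet vectors and the corresponding $k_i$ are set to zero, so the trailing equations in (ii) become trivial identities.

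The main obstacle will be the bookkeeping: checking carefully that every occurrence of $\partial_t$ in the Lemma is correctly redistributed through the expansion above, and verifying that the trichotomy in (i) exhausts all possibilities for killing the non-Frenet-frame remainder uniformly across the cases $r\leq 4$ and $r\geq 5$. Apart from the cancellation along $T$ and the way that condition (i) packages all of the non-Frenet remainder into a single alternative, the proof reduces to a routine comparison of coefficients.
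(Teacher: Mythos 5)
Your proposal is correct and follows essentially the same route as the paper: the paper likewise reads off condition (ii) by pairing $\tau_2(\gamma)=0$ (equivalently the Lemma's vector equation) with $T,E_2,E_3,E_4$, and obtains condition (i) from the residual $\partial_t$-term, which is exactly your expansion $\partial_t=\eta(T)T+\sum_i\eta(E_i)E_i+P$ in disguise. Your write-up simply makes explicit the cancellation along $T$ and the treatment of the directions outside $sp\{T,E_2,E_3,E_4\}$, which the paper dismisses as straightforward.
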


\begin{proof}
The proof is straightforward using equation (\ref{tau2}). $\gamma $ is
biharmonic if and only $\tau _{2}\left( \gamma \right) =0$. Since $\left\{
T,E_{2},E_{3},E_{4}\right\} $ is orthonormal, the critical term with $%
\partial _{t}$ determines the conditions of i). The equations of ii) are
obtained applying $T,E_{2},E_{3}$ and $E_{4},$ that is, $\widetilde{g}\left(
\tau _{2}\left( \gamma \right) ,E_{i}\right) =0,$ $i=1,2,...,m$.
\end{proof}

We will now analyze this theorem under four different cases.

\textbf{Case I.} $\left. \left[ \frac{f^{\prime \prime }}{f}-\left( \frac{%
f^{\prime }}{f}\right) ^{2}+\frac{c}{f^{2}}\right] \right\vert _{\gamma }=0.$

In this case, the equations of Theorem \ref{maintheorem} become%
\begin{equation*}
k_{1}=constant,
\end{equation*}%
\begin{equation*}
k_{1}^{2}+k_{2}^{2}=-\left. \frac{f^{\prime \prime }}{f}\right\vert _{\gamma
},
\end{equation*}%
\begin{equation*}
k_{2}^{\prime }=0,
\end{equation*}%
\begin{equation*}
k_{2}k_{3}=0.
\end{equation*}

Thus, we observe that $\gamma $ is a geodesic, a circle or a helix. The
global solution of the ODE%
\begin{equation*}
f^{\prime \prime }+\left( k_{1}^{2}+k_{2}^{2}\right) f=0
\end{equation*}%
is $f\left( t\right) =c_{1}\sin \left( \sqrt{k_{1}^{2}+k_{2}^{2}}t\right)
+c_{2}\cos \left( \sqrt{k_{1}^{2}+k_{2}^{2}}t\right) .$ If we write this
solution in the ODE
\begin{equation*}
ff^{\prime \prime }-\left( f^{\prime }\right) ^{2}+c=0
\end{equation*}%
for a common solution, we find%
\begin{equation*}
\left( c_{1}^{2}+c_{2}^{2}\right) \left( k_{1}^{2}+k_{2}^{2}\right) =c,
\end{equation*}%
which gives the following result:

i) No global real solution for $\widetilde{M}=I\times _{f}\mathbb{E}^{n}$ $%
\left( c=0\right) $ or $\widetilde{M}=I\times _{f}\mathbb{H}^{n}\left(
-1\right) $ $\left( c=-1\right) .$

ii) $f(t)=\frac{1}{\sqrt{k_{1}^{2}+k_{2}^{2}}}\sin \left( \sqrt{%
k_{1}^{2}+k_{2}^{2}}t+c_{0}\right) $ for $\widetilde{M}=I\times _{f}\mathbb{S%
}^{n}\left( 1\right) $ $\left( c=1\right) .$ Here $c_{0}$ is an arbitrary
constant. We used a phase shift as $\tan c_{0}=\frac{c_{2}}{c_{1}}$ for
simplification.

\begin{corollary}
\label{cor1}Let $\gamma :J\rightarrow I\times _{f}\mathbb{S}^{n}\left(
1\right) $ be a unit-speed helix with curvatures $k_{1}$ and $k_{2}$, or a
circle with curvature $k_{1}$. If
\begin{equation*}
f(t)=\frac{1}{\sqrt{k_{1}^{2}+k_{2}^{2}}}\sin \left( \sqrt{%
k_{1}^{2}+k_{2}^{2}}t+c_{0}\right)
\end{equation*}%
for some constant $c_{0}$ (replacing $k_{2}=0$ in case $\gamma $ is a
circle), then $\gamma $ is biharmonic.
\end{corollary}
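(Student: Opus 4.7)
The approach will be a direct verification that Case I of Theorem \ref{maintheorem} applies to $\gamma$. The hypotheses to check are the case assumption $\left.\left[f''/f - (f'/f)^2 + 1/f^2\right]\right|_\gamma = 0$ (taking $c=1$), together with the four curvature equations $k_1 = \text{const}$, $k_1^2+k_2^2 = -f''/f|_\gamma$, $k_2' = 0$, and $k_2 k_3 = 0$ listed in Case I.

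My first step is to dispose of the equations that hold automatically. Since $\gamma$ is a helix or circle, the curvatures $k_1$ and $k_2$ are constants by definition, so $k_1 = \text{const}$ and $k_2' = 0$ are immediate. The osculating order is at most $3$, hence under the convention $k_m = 0$ beyond the osculating order we have $k_3 = 0$, and therefore $k_2 k_3 = 0$ holds for free.

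The remaining work is a substitution. Setting $\kappa := \sqrt{k_1^2+k_2^2}$ and $u(t) := \kappa t + c_0$, one computes $f = \kappa^{-1}\sin u$, $f' = \cos u$, and $f'' = -\kappa\sin u = -\kappa^2 f$, which immediately yields $-f''/f = k_1^2+k_2^2$, i.e. the Case I curvature relation. For the case assumption itself, combining $f'/f = \kappa\cot u$ and $1/f^2 = \kappa^2\csc^2 u$ gives
$$\frac{f''}{f} - \left(\frac{f'}{f}\right)^2 + \frac{1}{f^2} = -\kappa^2 - \kappa^2\cot^2 u + \kappa^2\csc^2 u,$$
which vanishes by the identity $\csc^2 u - \cot^2 u = 1$.

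The main (and essentially only) subtlety is precisely this trigonometric cancellation: it is what forces the two apparently independent compatibility ODEs $f'' + (k_1^2+k_2^2)f = 0$ and $ff'' - (f')^2 + 1 = 0$ to share a common solution, and it is also what fails in the Euclidean and hyperbolic settings discussed just before the corollary. Once it is in hand, Case I of Theorem \ref{maintheorem} delivers $\tau_2(\gamma) = 0$, so $\gamma$ is biharmonic; the circle subcase is recovered verbatim by setting $k_2 = 0$.
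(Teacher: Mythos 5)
Your proposal is correct and follows essentially the same route as the paper: the corollary is exactly the sufficiency direction of the Case~I analysis, where the two ODEs $f''+(k_1^2+k_2^2)f=0$ and $ff''-(f')^2+c=0$ must hold simultaneously, and your substitution of $f=\kappa^{-1}\sin(\kappa t+c_0)$ together with the identity $\csc^2 u-\cot^2 u=1$ is the same computation the paper performs in the forward direction when it solves for the common solution with $c=1$. The only cosmetic difference is that you verify the given $f$ satisfies both ODEs, whereas the paper derives $f$ from them; the content is identical.
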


\begin{remark}
The converse statement is not always valid because even though the ODEs
might not be globally satisfied, they might be satisfied along the curve $%
\gamma .$ In fact, if $\gamma $ is a Legendre curve, then $\gamma
_{0}=constant$. Composing the ODEs with a constant function yields a
constant value, making it more likely to hold locally. Thus Case I i)
statement does not mean that there is no non-geodesic biharmonic curve in $%
\widetilde{M}=I\times _{f}\mathbb{E}^{n}$ or $\widetilde{M}=I\times _{f}%
\mathbb{H}^{n}\left( -1\right) $ within this case.
\end{remark}

\textbf{Case II.} $\eta \left( E_{2}\right) =0.$

Let us revisit the equation%
\begin{equation*}
T=\gamma _{0}^{\prime }\partial _{t}+W,
\end{equation*}%
where we denote the vertical part with%
\begin{equation*}
W=\sum_{i=1}^{n}\gamma _{i}^{\prime }\partial _{x_{i}}=T-\gamma _{0}^{\prime
}\partial _{t}.
\end{equation*}%
From the Levi-Civita connection of the warped product manifold, we have%
\begin{equation*}
\widetilde{\nabla }_{T}\partial _{t}=\left. \frac{f^{\prime }}{f}\right\vert
_{\gamma }W.
\end{equation*}%
We also have%
\begin{eqnarray*}
\widetilde{g}\left( T,W\right) &=&\widetilde{g}\left( T,T-\gamma
_{0}^{\prime }\partial _{t}\right) \\
&=&1-\left( \gamma _{0}^{\prime }\right) ^{2}=1-\cos ^{2}\theta \\
&=&\sin ^{2}\theta .
\end{eqnarray*}%
Differentiating $\eta \left( T\right) =\cos \theta $ , we find%
\begin{eqnarray*}
-\theta ^{\prime }\sin \theta &=&\widetilde{\nabla }_{T}\widetilde{g}\left(
T,\partial _{t}\right) \\
&=&\widetilde{g}\left( \widetilde{\nabla }_{T}T,\partial _{t}\right) +%
\widetilde{g}\left( T,\widetilde{\nabla }_{T}\partial _{t}\right) \\
&=&k_{1}\eta \left( E_{2}\right) +\left. \frac{f^{\prime }}{f}\right\vert
_{\gamma }\widetilde{g}\left( T,W\right) \\
&=&k_{1}\eta \left( E_{2}\right) +\left. \frac{f^{\prime }}{f}\right\vert
_{\gamma }\sin ^{2}\theta .
\end{eqnarray*}%
Thus, we can give the following Lemma:

\begin{lemma}
For a non-Legendre unit-speed Frenet curve of osculating order $r\geq 2$ in $%
I\times _{f}M^{n}\left( c\right) ,$ $E_{2}$ and $\partial _{t}$ are
orthogonal if and only if the warping function satisfies%
\begin{equation}
\left. f\right\vert _{\gamma }=c_{1}\csc \theta  \label{lemmaa}
\end{equation}%
for some arbitrary constant $c_{1}$. Moreover, if the curve is slant, then $%
f $ is a constant along the curve.
\end{lemma}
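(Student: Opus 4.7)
The plan is to exploit the identity derived just above,
\begin{equation*}
-\theta' \sin\theta = k_1 \eta(E_2) + \left.\frac{f'}{f}\right|_\gamma \sin^2\theta,
\end{equation*}
which came from differentiating $\eta(T) = \cos\theta$ along $\gamma$. Setting $\eta(E_2) = 0$, and observing that non-Legendre together with $\gamma$ being a genuine Frenet curve of order $r\ge 2$ rules out $\sin\theta \equiv 0$ (which would force $T = \pm\partial_t$ and hence a geodesic), this orthogonality condition reduces to the single first-order scalar ODE
\begin{equation*}
-\theta' = \left.\frac{f'}{f}\right|_\gamma \sin\theta
\end{equation*}
along the curve. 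This is the only relation I need to integrate.

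The key observation is that the right-hand side has exactly the logarithmic-derivative shape needed to promote $f(\gamma_0)\sin\theta$ to a first integral. Differentiating this quantity with respect to $s$ and using $\gamma_0' = \cos\theta$ yields
\begin{equation*}
\frac{d}{ds}\bigl(f(\gamma_0)\sin\theta\bigr) = \cos\theta\bigl[f'(\gamma_0)\sin\theta + f(\gamma_0)\theta'\bigr],
\end{equation*}
and the bracket vanishes by the ODE above, so the whole derivative is zero. Hence $f|_\gamma \sin\theta$ is a constant $c_1$, which rearranges to $\left. f\right|_\gamma = c_1\csc\theta$ as asserted.

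For the converse I would differentiate $f(\gamma_0)\sin\theta = c_1$ in the same way; the result factors as $\cos\theta\bigl[f'\sin\theta + f\theta'\bigr] = 0$, and the non-Legendre hypothesis $\cos\theta \neq 0$ kills the outer factor, recovering the ODE and hence $\eta(E_2) = 0$ via the original identity. The slant claim is then an immediate corollary: if $\theta$ is a constant, then $\csc\theta$ is constant, so $\left. f\right|_\gamma = c_1\csc\theta$ is constant along $\gamma$.

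The main obstacle I anticipate is the bookkeeping around the zeros of $\sin\theta$ and $\cos\theta$: I must justify that on the domain of interest the non-Legendre assumption ($\cos\theta\neq 0$) and the implicit non-geodesic assumption ($\sin\theta\neq 0$) both hold, so that the equivalence is genuine pointwise and the integration constant $c_1$ is well defined and nonzero. This is a small but necessary technicality rather than a conceptual difficulty; the actual computation is a one-line product rule.
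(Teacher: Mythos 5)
Your proposal is correct and follows essentially the same route as the paper: both start from the identity $-\theta'\sin\theta = k_1\eta(E_2) + \left.\frac{f'}{f}\right|_\gamma\sin^2\theta$, set $\eta(E_2)=0$, rule out $\sin\theta=0$ via the non-geodesic hypothesis, use $\gamma_0'=\cos\theta\neq 0$, and conclude $\left.f\right|_\gamma\sin\theta=c_1$. The only cosmetic difference is that you verify $f(\gamma_0)\sin\theta$ is a first integral by a direct product rule, whereas the paper integrates the equivalent logarithmic-derivative identity $\frac{\theta'\cos\theta}{\sin\theta}+\frac{f'(\gamma_0)}{f(\gamma_0)}\gamma_0'=0$.
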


\begin{proof}
From the equation%
\begin{equation}
-\theta ^{\prime }\sin \theta =k_{1}\eta \left( E_{2}\right) +\left. \frac{%
f^{\prime }}{f}\right\vert _{\gamma }\sin ^{2}\theta ,  \label{lemm}
\end{equation}%
$E_{2}$ and $\partial _{t}$ are orthogonal if and only if%
\begin{equation*}
\theta ^{\prime }\sin \theta +\left. \frac{f^{\prime }}{f}\right\vert
_{\gamma }\sin ^{2}\theta =0.
\end{equation*}%
If $\sin \theta =0$, then $T=\pm \partial _{t}$ and $\gamma $ becomes a
geodesic. This contradicts $r\geq 2.$ Thus we have
\begin{equation*}
\frac{\theta ^{\prime }}{\sin \theta }+\left. \frac{f^{\prime }}{f}%
\right\vert _{\gamma }=0.
\end{equation*}%
More clearly, this equation can be written as%
\begin{equation*}
\frac{\theta ^{\prime }}{\sin \theta }+\frac{f^{\prime }\left( \gamma
_{0}\right) }{f\left( \gamma _{0}\right) }=0,
\end{equation*}%
where $\gamma _{0}\left( s\right) =t\circ \gamma \left( s\right) $ is the
first component of $\gamma $ and $s$ is the arc-length parameter. If we
multiply the equation with $\gamma _{0}^{\prime }=\eta \left( T\right) =\cos
\theta \neq 0,$ we obtain%
\begin{equation*}
\frac{\theta ^{\prime }\cos \theta }{\sin \theta }+\frac{f^{\prime }\left(
\gamma _{0}\right) }{f\left( \gamma _{0}\right) }\gamma _{0}^{\prime }=0.
\end{equation*}%
Now we can integrate with respect to $s$ and find (\ref{lemmaa}). If $\theta
$ is a constant (i.e. $\gamma $ is slant), we deduce that $f$ is a constant
along the curve.
\end{proof}

The following proposition holds for slant curves, regardless of $E_{2}$ and $%
\partial _{t}$ are orthogonal.

\begin{proposition}
\label{propslant}If $\gamma :J\rightarrow I\times _{f}M^{n}\left( c\right) $
is a non-Legendre slant curve, then%
\begin{equation*}
\log c_{1}=\int k_{1}\eta \left( E_{2}\right) ds+\frac{\sin ^{2}\theta }{%
\cos \theta }\log \left\vert f\left( s\cos \theta +c_{0}\right) \right\vert .
\end{equation*}
\end{proposition}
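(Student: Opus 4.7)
The plan is to derive the stated identity directly from equation (\ref{lemm}) by exploiting the slant hypothesis and integrating the resulting first-order ODE along the curve.

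First I would specialize equation (\ref{lemm}) to a slant curve. Since the structural angle $\theta$ is constant, $\theta^{\prime}=0$, so (\ref{lemm}) collapses to
\begin{equation*}
k_{1}\eta(E_{2})=-\left.\frac{f^{\prime}}{f}\right\vert_{\gamma}\sin^{2}\theta.
\end{equation*}
Because $\gamma$ is non-Legendre, $\cos\theta\neq 0$, so formula (\ref{eq2}) applies and gives $\gamma_{0}(s)=s\cos\theta+c_{0}$. Consequently, $\left.\frac{f^{\prime}}{f}\right\vert_{\gamma}=\frac{f^{\prime}(s\cos\theta+c_{0})}{f(s\cos\theta+c_{0})}$.

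Next I would integrate both sides with respect to the arc-length parameter $s$. The left side yields $\int k_{1}\eta(E_{2})\,ds$. For the right side, I perform the substitution $u=s\cos\theta+c_{0}$, which gives $du=\cos\theta\,ds$ (valid since $\cos\theta\neq 0$), and therefore
\begin{equation*}
\int\frac{f^{\prime}(s\cos\theta+c_{0})}{f(s\cos\theta+c_{0})}\,ds=\frac{1}{\cos\theta}\int\frac{f^{\prime}(u)}{f(u)}\,du=\frac{1}{\cos\theta}\log\left\vert f(s\cos\theta+c_{0})\right\vert+\text{const}.
\end{equation*}
Combining, we obtain
\begin{equation*}
\int k_{1}\eta(E_{2})\,ds+\frac{\sin^{2}\theta}{\cos\theta}\log\left\vert f(s\cos\theta+c_{0})\right\vert=\text{const}.
\end{equation*}
Writing the integration constant in the form $\log c_{1}$ yields exactly the stated formula.

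The argument is essentially routine once the slant assumption is invoked; there is no real obstacle. The only subtlety is noticing that \emph{non-Legendre} is precisely the condition $\cos\theta\neq 0$ needed to make the change of variables $u=s\cos\theta+c_{0}$ legal and to justify the appearance of $\cos\theta$ in the denominator of the second term. One could also remark that $c_{1}$ is positive (absorbing a sign into $\log\lvert\cdot\rvert$) so that $\log c_{1}$ is well-defined as a real constant.
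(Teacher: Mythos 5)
Your proposal is correct and follows essentially the same route as the paper: specialize equation (\ref{lemm}) to the slant case, use $\gamma_{0}(s)=s\cos\theta+c_{0}$ with $\cos\theta\neq 0$, and integrate $\frac{f^{\prime}(\gamma_{0})\gamma_{0}^{\prime}}{f(\gamma_{0})}$ to produce the logarithmic term. The remark that the integration constant is written as $\log c_{1}$ is a helpful clarification but does not change the argument.
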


\begin{proof}
The proof follows from equation (\ref{lemm}), considering $\gamma _{0}\left(
s\right) =s\cos \theta +c_{0}.$ Since $\theta $ is a constant for a slant
curve, equation (\ref{lemm}) becomes%
\begin{equation*}
0=k_{1}\eta \left( E_{2}\right) +\frac{f^{\prime }\left( \gamma _{0}\right)
}{f\left( \gamma _{0}\right) }\sin ^{2}\theta .
\end{equation*}%
From $\gamma _{0}^{\prime }=\cos \theta \neq 0,$ we can write
\begin{equation*}
0=k_{1}\eta \left( E_{2}\right) +\frac{f^{\prime }\left( \gamma _{0}\right)
\gamma _{0}^{\prime }}{f\left( \gamma _{0}\right) }\frac{\sin ^{2}\theta }{%
\cos \theta }.
\end{equation*}%
Then we integrate the last equation along the curve $\gamma $ and obtain the
result.
\end{proof}

After careful investigation, let us give the biharmonicity corollary of Case
II:

\begin{corollary}
Let $\gamma :J\rightarrow I\times _{f}M^{n}\left( c\right) $ be a
non-Legendre unit-speed helix with curvatures $k_{1}$ and $k_{2}$, or a
non-Legendre circle with curvature $k_{1}$. If $\left. f\right\vert _{\gamma
}=c_{1}\csc \theta $ for some arbitrary constant $c_{1},$ then $\gamma $ is
biharmonic if and only if its curvatures satisfy%
\begin{equation}
k_{1}^{2}+k_{2}^{2}=\left. \frac{c}{f^{2}}\right\vert _{\gamma }-\left(
\left. \frac{f^{\prime }}{f}\right\vert _{\gamma }\right) ^{2}-\left. \left(
\frac{f^{\prime \prime }}{f}-\left( \frac{f^{\prime }}{f}\right) ^{2}+\frac{c%
}{f^{2}}\right) \right\vert _{\gamma }\cos ^{2}\theta .  \label{eq1}
\end{equation}
\end{corollary}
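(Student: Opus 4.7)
The plan is to specialize Theorem \ref{maintheorem} to the situation at hand, using the preceding Lemma to convert the warping-function hypothesis $f|_\gamma = c_1 \csc\theta$ into the orthogonality condition $\eta(E_2) = 0$ that characterizes Case II of the main theorem.

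First, I would observe that for a non-geodesic circle (osculating order $2$) or helix (osculating order $3$), the curvatures satisfy $k_1 = \mathrm{const}$, $k_2 = \mathrm{const}$ (with $k_2 = 0$ in the circle case), and $k_3 = 0$. Hence three of the four equations posited in Theorem \ref{maintheorem}, namely $k_1' = 0$, $k_2' = 0$, and $k_2 k_3 = 0$, hold on the left-hand side automatically. Second, I would invoke the preceding Lemma, which for a non-Legendre Frenet curve of osculating order $r \geq 2$ guarantees that the hypothesis $f|_\gamma = c_1 \csc\theta$ is equivalent to $\eta(E_2) = 0$. This simultaneously validates condition $i)$ of Theorem \ref{maintheorem} and forces the right-hand sides of the $k_2'$ and $k_2 k_3$ equations (each of which carries a factor of $\eta(E_2)$) to vanish, matching the left-hand sides above.

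What remains is the single equation for $k_1^2 + k_2^2$ in Theorem \ref{maintheorem}. Substituting $\eta(E_2) = 0$ and $\eta(T) = \cos\theta$ into the bracket $[\eta(T)^2 + \eta(E_2)^2]$ in that equation immediately yields (\ref{eq1}). Since every implication used above is a genuine equivalence (the helix/circle data determine the zero-derivative conditions outright, the Lemma is bidirectional, and the remaining equation reduces symbolically), this establishes that biharmonicity is equivalent to (\ref{eq1}). I do not anticipate any serious obstacle here: the corollary is essentially a bookkeeping consequence of the main theorem, with the only mildly substantive input being the Lemma that identifies the warping-function condition with the orthogonality $\eta(E_2) = 0$.
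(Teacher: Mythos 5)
Your proof is correct and takes exactly the route the paper intends: the paper states this corollary without an explicit proof, as a direct specialization of Theorem \ref{maintheorem} to Case II, with the preceding Lemma converting the hypothesis $\left. f\right\vert _{\gamma }=c_{1}\csc \theta$ into $\eta \left( E_{2}\right) =0$. Your bookkeeping of which equations of condition $ii)$ hold automatically for a circle or helix (and why their right-hand sides vanish) is accurate, so nothing is missing.
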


Moreover, the following corollary holds for non-Legendre slant curves:

\begin{corollary}
Let $\gamma :J\rightarrow I\times _{f}M^{n}\left( c\right) $ be a unit-speed
non-Legendre slant helix with curvatures $k_{1}$ and $k_{2}$, or a
non-Legendre slant circle with curvature $k_{1}$. If $\left. f\right\vert
_{\gamma }=c_{1}\csc \theta $=constant, then $\gamma $ is biharmonic if and
only if $c>0$ and
\begin{equation*}
k_{1}^{2}+k_{2}^{2}=\frac{c}{c_{1}^{2}}\sin ^{4}\theta .
\end{equation*}
\end{corollary}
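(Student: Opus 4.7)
The plan is to derive this corollary as a direct specialization of the preceding corollary, using the slant hypothesis to collapse the right-hand side of equation (\ref{eq1}). Since $\gamma$ is slant, the structural angle $\theta$ is constant, and together with the hypothesis $\left.f\right\vert_{\gamma}=c_{1}\csc\theta$ this forces $\left.f\right\vert_{\gamma}$ to be constant. The core of the argument is to translate this constancy into the vanishing of $f'$ and $f''$ evaluated along the curve, after which the identity reduces to a short algebraic manipulation.

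First I would differentiate $f(\gamma_{0}(s))=c_{1}\csc\theta$ with respect to the arc-length parameter. The chain rule yields $f'(\gamma_{0})\cos\theta=0$, and the non-Legendre assumption $\cos\theta\neq 0$ forces $f'(\gamma_{0}(s))=0$ for every $s\in J$. Because $\gamma_{0}(s)=s\cos\theta+c_{0}$ is strictly monotone, its image is an open subinterval of $I$ on which $f'$ vanishes identically; differentiating once more then shows that $f''$ also vanishes on that subinterval. In particular, $\left.(f'/f)\right\vert_{\gamma}=0$ and $\left.(f''/f)\right\vert_{\gamma}=0$.

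Substituting these identities together with $\left.f^{2}\right\vert_{\gamma}=c_{1}^{2}\csc^{2}\theta$ into equation (\ref{eq1}) collapses the right-hand side to
\[
\frac{c}{f^{2}}\bigl(1-\cos^{2}\theta\bigr)=\frac{c\sin^{2}\theta}{c_{1}^{2}\csc^{2}\theta}=\frac{c}{c_{1}^{2}}\sin^{4}\theta,
\]
which is the asserted curvature identity. Since $\gamma$ is non-geodesic we have $k_{1}>0$, so the left-hand side is strictly positive; combined with $c_{1}^{2}>0$ and $\sin^{4}\theta>0$ (the latter from the non-Legendre hypothesis) this forces $c>0$. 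Conversely, if $c>0$ and the curvature identity holds, equation (\ref{eq1}) is satisfied and the preceding corollary supplies the biharmonicity of $\gamma$.

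The only subtle point in this argument is the step from the fact that $\left.f\right\vert_{\gamma}$ is constant to the pointwise vanishing of $f'$ and $f''$ on $\gamma_{0}(J)$, which depends on the strict monotonicity of $\gamma_{0}$ and hence on the non-Legendre hypothesis $\cos\theta\neq 0$. Beyond this observation, the proof reduces to routine algebraic simplification of the preceding corollary's biharmonic condition.
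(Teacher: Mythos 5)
Your proposal is correct and follows essentially the same route as the paper: differentiate $\left. f\right\vert _{\gamma }=c_{1}\csc \theta$ along the curve, use $\cos \theta \neq 0$ to conclude $\left. f^{\prime }\right\vert _{\gamma }=\left. f^{\prime \prime }\right\vert _{\gamma }=0$, and substitute into equation (\ref{eq1}). Your explicit justification of the sign condition $c>0$ (via $k_{1}>0$ and $\sin ^{4}\theta >0$) is a small but welcome addition that the paper's proof leaves implicit.
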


\begin{proof}
Using the fact that the structural angle is a non-zero constant (i.e. $\cos
\theta \neq 0$) for a non-Legendre slant curve and equation (\ref{eq2}),
after differentiating $\left. f\right\vert _{\gamma }=c_{1}\csc \theta $%
=constant, we find%
\begin{equation*}
\frac{d}{ds}\left( \left. f\right\vert _{\gamma }\right) =\left. \frac{df}{dt%
}\right\vert _{\gamma }.\gamma _{0}^{\prime }\left( s\right) =\left.
f^{\prime }\right\vert _{\gamma }\cos \theta =0.
\end{equation*}%
This results $\left. f^{\prime }\right\vert _{\gamma }=0$. In a similar way,
we obtain $\left. f^{\prime \prime }\right\vert _{\gamma }=0.$ Thus, the
right-hand side of equation (\ref{eq1}) becomes $\frac{c}{c_{1}^{2}}\sin
^{4}\theta ,$ which completes the proof.
\end{proof}

\textbf{Case III.} $E_{2}$ is parallel to $\partial _{t},$ i.e. $\eta \left(
E_{2}\right) =\pm 1.$

In this case, we can write $E_{2}=\varepsilon \partial _{t}$ where $%
\varepsilon $ denotes $\pm 1.$ This is the same as $\partial
_{t}=\varepsilon E_{2}.$ For this case, it is directly seen that%
\begin{equation}
\eta \left( T\right) =\widetilde{g}\left( T,\partial _{t}\right) =\widetilde{%
g}\left( T,\varepsilon E_{2}\right) =0  \label{eq3}
\end{equation}%
and%
\begin{equation*}
\eta \left( E_{2}\right) =\widetilde{g}\left( E_{2},\partial _{t}\right) =%
\widetilde{g}\left( E_{2},\varepsilon E_{2}\right) =\varepsilon .
\end{equation*}%
Equation (\ref{eq3}) means that $\gamma $ is a Legendre curve, that is $\cos
\theta =0.$ Considering $T$ is vertical, we have%
\begin{equation*}
\widetilde{\nabla }_{T}\partial _{t}=\left. \frac{f^{\prime }}{f}\right\vert
_{\gamma }T.
\end{equation*}%
So, if we differentiate $E_{2}=\varepsilon \partial _{t}$ along $\gamma ,$
we get%
\begin{equation*}
-k_{1}T+k_{2}E_{3}=\varepsilon \widetilde{\nabla }_{T}\partial
_{t}=\varepsilon \left. \frac{f^{\prime }}{f}\right\vert _{\gamma }T,
\end{equation*}%
which results
\begin{equation}
k_{1}=-\varepsilon \left. \frac{f^{\prime }}{f}\right\vert _{\gamma },\text{
}k_{2}=0.  \label{eq4}
\end{equation}%
Now we can state our next corollary for Case III:

\begin{corollary}
\label{cor4}Let $\gamma :J\rightarrow I\times _{f}M^{n}\left( c\right) $ be
a unit-speed Legendre circle with $E_{2}\parallel \partial _{t}.$ Then $%
\gamma $ is biharmonic if and only if its curvature satisfies%
\begin{equation}
k_{1}=\sqrt{\left. \frac{-f^{\prime \prime }}{f}\right\vert _{\gamma }}%
=-\varepsilon \left. \frac{f^{\prime }}{f}\right\vert _{\gamma }>0.
\label{eqq}
\end{equation}
\end{corollary}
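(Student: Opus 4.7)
The plan is to combine the structural information that was already extracted in Case III (before the corollary statement) with the curvature equations of Theorem \ref{maintheorem}, specialised to a Legendre circle.

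First I would record what the hypotheses force geometrically. Since $E_{2}\parallel \partial_{t}$, we have $E_{2}=\varepsilon \partial_{t}$, from which equations (\ref{eq3}) and (\ref{eq4}) already give $\eta(T)=0$ (so $\gamma$ is automatically Legendre, consistent with the hypothesis), $\eta(E_{2})=\varepsilon$, and
\begin{equation*}
k_{1}=-\varepsilon \left.\tfrac{f'}{f}\right|_{\gamma},\qquad k_{2}=0.
\end{equation*}
The last identity also matches the standing assumption that $\gamma$ is a circle. Moreover, since $E_{3},E_{4}$ are orthogonal to $E_{2}=\varepsilon\partial_{t}$, we automatically get $\eta(E_{3})=\eta(E_{4})=0$.

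Next I would invoke Theorem \ref{maintheorem}. Condition (i) is satisfied trivially because $\partial_{t}=\varepsilon E_{2}\in\mathrm{sp}\{T,E_{2},E_{3},E_{4}\}$. In the four equations of (ii), the third and fourth hold automatically since $k_{2}=0$ and $\eta(E_{3})=\eta(E_{4})=0$; the first ($k_{1}=$ const) is built into the definition of a circle. The only nontrivial condition is the second equation, and substituting $\eta(T)=0$, $\eta(E_{2})^{2}=1$, and $k_{2}=0$ into it collapses the right-hand side cleanly:
\begin{equation*}
k_{1}^{2}=\left.\tfrac{c}{f^{2}}\right|_{\gamma}-\bigl(\left.\tfrac{f'}{f}\right|_{\gamma}\bigr)^{2}-\left.\bigl(\tfrac{f''}{f}-\bigl(\tfrac{f'}{f}\bigr)^{2}+\tfrac{c}{f^{2}}\bigr)\right|_{\gamma}=\left.-\tfrac{f''}{f}\right|_{\gamma}.
\end{equation*}
Combining this with the geometric identity $k_{1}=-\varepsilon\,f'/f|_{\gamma}$ from (\ref{eq4}) gives the full chain $k_{1}=\sqrt{-f''/f|_{\gamma}}=-\varepsilon f'/f|_{\gamma}$, and the requirement $k_{1}>0$ is just the non-geodesic assumption of a circle.

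The main (and really only) subtlety is the bookkeeping of the sign and the fact that the geometric equality $k_{1}=-\varepsilon f'/f|_{\gamma}$ is forced \emph{before} imposing biharmonicity — so biharmonicity is logically equivalent to the single extra algebraic condition $k_{1}^{2}=-f''/f|_{\gamma}$; there is no hidden ODE to solve. The converse direction uses the same equations read in reverse: any unit-speed Legendre circle with $E_{2}\parallel\partial_{t}$ whose (positive) curvature satisfies (\ref{eqq}) meets all hypotheses of Theorem \ref{maintheorem} and is therefore biharmonic.
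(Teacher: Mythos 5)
Your proposal is correct and follows essentially the same route as the paper: substitute $\eta\left( T\right) =0$, $\eta\left( E_{2}\right) =\pm 1$ and $k_{2}=0$ into the second equation of Theorem \ref{maintheorem} ii) to get $k_{1}^{2}=-\left. f^{\prime\prime}/f\right\vert _{\gamma }$, then combine with the pre-existing geometric identity $k_{1}=-\varepsilon \left. f^{\prime }/f\right\vert _{\gamma }$ from (\ref{eq4}). You are somewhat more explicit than the paper in verifying condition i) and the vanishing of the third and fourth equations, but the argument is the same.
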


\begin{proof}
Since $\eta \left( T\right) =\cos \theta =0,$ $\eta \left( E_{2}\right)
=\varepsilon =\pm 1,$ from the second equation of Theorem \ref{maintheorem}
ii), we can write%
\begin{equation}
k_{1}^{2}=\left. \frac{c}{f^{2}}\right\vert _{\gamma }-\left( \left. \frac{%
f^{\prime }}{f}\right\vert _{\gamma }\right) ^{2}-\left. \left( \frac{%
f^{\prime \prime }}{f}-\left( \frac{f^{\prime }}{f}\right) ^{2}+\frac{c}{%
f^{2}}\right) \right\vert _{\gamma }=-\left. \frac{f^{\prime \prime }}{f}%
\right\vert _{\gamma }.  \label{eq5}
\end{equation}%
Also, $\cos \theta =0$ gives us $\gamma _{0}\left( s\right) =c_{0}=constant,$
where $c_{0}\in I.$ Then equations (\ref{eq4}) and (\ref{eq5}) complete the
proof.
\end{proof}

\begin{remark}
A natural equation to consider in this context is the ODE
\begin{equation*}
-f\left( t\right) f^{\prime \prime }\left( t\right) =\left( f^{\prime
}\left( t\right) \right) ^{2},
\end{equation*}%
whose solution is%
\begin{equation}
f\left( t\right) =c_{2}\sqrt{2t+c_{1}},  \label{eq6}
\end{equation}%
where $c_{1}$ and $c_{2}$ are arbitrary constants. Thus, if $f$ is of the
form (\ref{eq6}), it would satisfy
\begin{equation}
\sqrt{\left. \frac{-f^{\prime \prime }}{f}\right\vert _{\gamma }}%
=-\varepsilon \left. \frac{f^{\prime }}{f}\right\vert _{\gamma }>0,
\label{eq7}
\end{equation}%
for any $\gamma _{0}\left( s\right) =c_{0}.$ But, since (\ref{eq7}) is not
necessarily global, it suffices for any $f$ (even not in the form of (\ref%
{eq6})) to satisfy equation (\ref{eq7}) for the first component of the
curve, i.e. $\gamma _{0}\left( s\right) =c_{0}\in I,$ rather than
everywhere. Then a Legendre circle whose curvature $k_{1}$ satisfies (\ref%
{eqq}), becomes biharmonic in this warped product manifold.
\end{remark}

\textbf{Case IV.} $\left. \left[ \frac{f^{\prime \prime }}{f}-\left( \frac{%
f^{\prime }}{f}\right) ^{2}+\frac{c}{f^{2}}\right] \right\vert _{\gamma
}\neq 0$ and $\eta \left( E_{2}\right) \neq 0,\pm 1.$

In this case, from Theorem \ref{maintheorem}, we have $\partial _{t}\in
sp\left\{ T,E_{2},E_{3},E_{4}\right\} .$ So, we can write%
\begin{equation}
\partial _{t}=\cos \theta T\pm \sin \theta \cos w_{1}\cos w_{2}E_{2}\pm \sin
\theta \cos w_{1}\sin w_{2}E_{3}\pm \sin \theta \sin w_{1}E_{4}  \label{eq8}
\end{equation}%
for three angle functions $\theta ,$ $w_{1}$ and $w_{2}$ along the curve.
Here $\theta $ is the structural angle. $w_{1}$ denotes the angle between $%
\left( \partial _{t}-\cos \theta T\right) $ and its projection onto $%
sp\left\{ E_{2},E_{3}\right\} .$ $w_{2}$ denotes the angle between $E_{2}$
and the projection of $\left( \partial _{t}-\cos \theta T\right) $ onto $%
sp\left\{ E_{2},E_{3}\right\} $. Then, we obtain%
\begin{equation*}
\eta \left( E_{2}\right) =\pm \sin \theta \cos w_{1}\cos w_{2},
\end{equation*}%
\begin{equation*}
\eta \left( E_{3}\right) =\pm \sin \theta \cos w_{1}\sin w_{2},
\end{equation*}%
\begin{equation*}
\eta \left( E_{4}\right) =\pm \sin \theta \sin w_{1}.
\end{equation*}%
Thus, we can state our final corollary as follows:

\begin{corollary}
Let $\gamma :J\rightarrow \widetilde{M}^{n+1}$ be a unit-speed Frenet curve
of osculating order $r\geq 4,$ $\eta \left( E_{2}\right) \neq 0,\pm 1$ and $%
\left. \left[ ff^{\prime \prime }-\left( f^{\prime }\right) ^{2}+c\right]
\right\vert _{\gamma }\neq 0.$ Then $\gamma $ is biharmonic if and only if $%
k_{1}$ is a positive constant and%
\begin{equation*}
k_{1}^{2}+k_{2}^{2}=\left. \frac{c}{f^{2}}\right\vert _{\gamma }-\left(
\left. \frac{f^{\prime }}{f}\right\vert _{\gamma }\right) ^{2}-\left. \left(
\frac{f^{\prime \prime }}{f}-\left( \frac{f^{\prime }}{f}\right) ^{2}+\frac{c%
}{f^{2}}\right) \right\vert _{\gamma }\left[ \cos ^{2}\theta +\sin
^{2}\theta \cos ^{2}w_{1}\cos ^{2}w_{2}\right] ,
\end{equation*}%
\begin{equation*}
k_{2}^{\prime }=\frac{1}{2}\left. \left( \frac{f^{\prime \prime }}{f}-\left(
\frac{f^{\prime }}{f}\right) ^{2}+\frac{c}{f^{2}}\right) \right\vert
_{\gamma }\sin ^{2}\theta \cos ^{2}w_{1}\sin 2w_{2},
\end{equation*}%
\begin{equation*}
k_{2}k_{3}=\frac{1}{2}\left. \left( \frac{f^{\prime \prime }}{f}-\left(
\frac{f^{\prime }}{f}\right) ^{2}+\frac{c}{f^{2}}\right) \right\vert
_{\gamma }\sin ^{2}\theta \sin 2w_{1}\cos w_{2},
\end{equation*}%
where $\partial _{t}\in sp\left\{ T,E_{2},E_{3},E_{4}\right\} $ is given by
equation $(\ref{eq8})$.
\end{corollary}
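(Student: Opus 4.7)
The plan is to apply Theorem \ref{maintheorem} directly and translate its conditions into the angle parametrization (\ref{eq8}). Under the two standing hypotheses of Case IV, namely $\eta(E_{2})\neq 0,\pm 1$ and $\left.\bigl[ff''-(f')^{2}+c\bigr]\right|_{\gamma}\neq 0$, condition (i) of Theorem \ref{maintheorem} forces $\partial_{t}\in\mathrm{sp}\{T,E_{2},E_{3},E_{4}\}$, so the expansion (\ref{eq8}) is legitimate and the corollary reduces to extracting the four scalar equations in (ii) through this decomposition.

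First I would justify the expansion (\ref{eq8}). The coefficient of $T$ is $\widetilde{g}(\partial_{t},T)=\eta(T)=\cos\theta$ by the definition of the structural angle. Since $\partial_{t}$ is unit and $T$ is orthogonal to $E_{2},E_{3},E_{4}$, the residual vector $\partial_{t}-\cos\theta\,T$ lies in $\mathrm{sp}\{E_{2},E_{3},E_{4}\}$ with norm $\sin\theta$. Decomposing this residual by spherical angles, with $w_{1}$ the angle out of $\mathrm{sp}\{E_{2},E_{3}\}$ and $w_{2}$ the angle from $E_{2}$ within $\mathrm{sp}\{E_{2},E_{3}\}$, yields exactly (\ref{eq8}), and reading off inner products gives the stated formulas for $\eta(E_{2})$, $\eta(E_{3})$, $\eta(E_{4})$.

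Next I would substitute into the four equations of Theorem \ref{maintheorem} (ii). The first equation, $k_{1}=\text{const}$, is unchanged; positivity follows from $r\geq 4\geq 2$, so $\gamma$ is non-geodesic and $k_{1}>0$. The second equation is immediate from $\eta(T)^{2}+\eta(E_{2})^{2}=\cos^{2}\theta+\sin^{2}\theta\cos^{2}w_{1}\cos^{2}w_{2}$. For the third, I would compute
\begin{equation*}
\eta(E_{2})\eta(E_{3})=\sin^{2}\theta\cos^{2}w_{1}\cos w_{2}\sin w_{2}=\tfrac{1}{2}\sin^{2}\theta\cos^{2}w_{1}\sin 2w_{2},
\end{equation*}
where the $\pm$ signs cancel because they multiply. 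For the fourth, similarly,
\begin{equation*}
\eta(E_{2})\eta(E_{4})=\sin^{2}\theta\sin w_{1}\cos w_{1}\cos w_{2}=\tfrac{1}{2}\sin^{2}\theta\sin 2w_{1}\cos w_{2}.
\end{equation*}
Plugging these into the third and fourth equations of Theorem \ref{maintheorem} (ii) produces the two displayed ODEs for $k_{2}'$ and $k_{2}k_{3}$.

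The main obstacle is really just the bookkeeping of the angle decomposition: one must confirm that (\ref{eq8}) covers every admissible configuration under $\eta(E_{2})\neq 0,\pm 1$, and that the sign ambiguities collapse in the products $\eta(E_{2})\eta(E_{3})$ and $\eta(E_{2})\eta(E_{4})$ used by the theorem. Once this is settled, the corollary is a direct algebraic rewriting of Theorem \ref{maintheorem} (ii) under the parametrization (\ref{eq8}), and no further analysis of the biharmonic ODE is needed.
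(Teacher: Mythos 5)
Your proposal is correct and follows exactly the route the paper takes: the paper's own proof is the one-line remark that the corollary follows from Theorem \ref{maintheorem} combined with equation (\ref{eq8}), and your substitution of $\eta(E_2)$, $\eta(E_3)$, $\eta(E_4)$ into the equations of part (ii), together with the double-angle simplifications, is precisely the omitted computation. The extra care you take with the sign ambiguities and the justification of the spherical decomposition is sound and only makes the argument more complete.
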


\begin{proof}
The proof is straightforward using Theorem \ref{maintheorem} with equation (%
\ref{eq8}).
\end{proof}

\section{Construction of Examples in $I\times _{f}S^{2}\left( 1\right) $}

Let us consider $\widetilde{M}^{n+1}=I\times _{f}S^{2}\left( 1\right) ,$
where $f\left( t\right) =\sin \left( t\right) $ and $S^{2}\left( 1\right)
\subset E^{3}$ is the unit-sphere in Euclidean space. We will construct a
biharmonic curve in $\widetilde{M}^{n+1}$ using Corollary \ref{cor1}.
Firstly, notice that $f$ is of the form%
\begin{equation*}
f(t)=\frac{1}{\sqrt{k_{1}^{2}+k_{2}^{2}}}\sin \left( \sqrt{%
k_{1}^{2}+k_{2}^{2}}t+c_{0}\right) ,
\end{equation*}%
for $c_{0}=0$ and $k_{1}^{2}+k_{2}^{2}=1.$ For simplicity, let us look for a
biharmonic circle, that is, $\kappa _{1}=1$ and $\kappa _{2}=0.$ We will use
spherical coordinate functions $\left\{ x_{1},x_{2}\right\} $ of $%
S^{2}\left( 1\right) .$ Here $x_{1}\in \left( 0,\pi \right) $ and $x_{2}\in
\left( 0,2\pi \right) .$ Recall that the induced metric tensor field of $%
S^{2}\left( 1\right) $ can be written as
\begin{equation*}
g=dx_{1}^{2}+\cos ^{2}x_{1}dx_{2}^{2},
\end{equation*}%
and the corresponding Levi-Civita Connection is%
\begin{equation*}
\nabla _{\partial _{x_{1}}}\partial _{x_{1}}=0,
\end{equation*}%
\begin{equation*}
\nabla _{\partial _{x_{1}}}\partial _{x_{2}}=\nabla _{\partial
_{x_{2}}}\partial _{x_{1}}=-\tan x_{1}\partial _{x_{2}},
\end{equation*}%
\begin{equation*}
\nabla _{\partial _{x_{2}}}\partial _{x_{2}}=\frac{1}{2}\sin 2x_{1}\partial
_{x_{1}}.
\end{equation*}%
As a result, we can write the metric tensor field of $I\times
_{f}S^{2}\left( 1\right) $ as%
\begin{eqnarray*}
\widetilde{g} &=&dt^{2}+f^{2}g \\
&=&dt^{2}+\sin ^{2}\left( t\right) \left( dx_{1}^{2}+\cos
^{2}x_{1}dx_{2}^{2}\right) .
\end{eqnarray*}%
So, the corresponding Levi-Civita Connection is calculated as%
\begin{equation*}
\widetilde{\nabla }_{\partial _{t}}\partial _{t}=0,
\end{equation*}%
\begin{equation*}
\widetilde{\nabla }_{\partial _{t}}\partial _{x_{1}}=\widetilde{\nabla }%
_{\partial _{x_{1}}}\partial _{t}=\cot \left( t\right) \partial _{x_{1}},
\end{equation*}%
\begin{equation*}
\widetilde{\nabla }_{\partial _{t}}\partial _{x_{2}}=\widetilde{\nabla }%
_{\partial _{x_{2}}}\partial _{t}=\cot \left( t\right) \partial _{x_{2}},
\end{equation*}%
\begin{equation*}
\widetilde{\nabla }_{\partial _{x_{1}}}\partial _{x_{1}}=-\frac{1}{2}\sin
\left( 2t\right) \partial _{t},
\end{equation*}%
\begin{equation*}
\widetilde{\nabla }_{\partial _{x_{1}}}\partial _{x_{2}}=\widetilde{\nabla }%
_{\partial _{x_{2}}}\partial _{x_{1}}=-\tan x_{1}\partial _{x_{2}},
\end{equation*}%
\begin{equation*}
\widetilde{\nabla }_{\partial _{x_{2}}}\partial _{x_{2}}=-\frac{1}{2}\sin
\left( 2t\right) \cos ^{2}x_{1}\partial _{t}+\frac{1}{2}\sin \left(
2x_{1}\right) \partial _{x_{1}}.
\end{equation*}%
Now, let us define the curve $\gamma :J\rightarrow I\times _{f}S^{2}\left(
1\right) ,$
\begin{equation*}
s\mapsto \gamma \left( s\right) =\left( \frac{\pi }{4},\sqrt{2}s,\pi \right)
.
\end{equation*}%
Its tangent vector field is%
\begin{equation*}
T=\gamma ^{\prime }=\left( 0,\sqrt{2},0\right) =\sqrt{2}\partial _{x_{1}}.
\end{equation*}%
Notice that
\begin{equation*}
\widetilde{g}\left( \gamma ^{\prime },\gamma ^{\prime }\right) =\widetilde{g}%
\left( \sqrt{2}\partial _{x_{1}},\sqrt{2}\partial _{x_{1}}\right) =2\left.
\sin ^{2}\left( t\right) \right\vert _{\gamma }=2\sin ^{2}\left( \frac{\pi }{%
4}\right) =1,
\end{equation*}%
namely, $\gamma $ is unit-speed and $s$ is the arc-length parameter. Then we
have%
\begin{equation*}
\widetilde{\nabla }_{T}T=\widetilde{\nabla }_{\sqrt{2}\partial _{x_{1}}}%
\sqrt{2}\partial _{x_{1}}=2\widetilde{\nabla }_{\partial _{x_{1}}}\partial
_{x_{1}}=-\left. \sin \left( 2t\right) \right\vert _{\gamma }\partial
_{t}=-\partial _{t}.
\end{equation*}%
As a result, we find%
\begin{equation*}
\kappa _{1}=\sqrt{\widetilde{g}\left( \widetilde{\nabla }_{T}T,\widetilde{%
\nabla }_{T}T\right) }=1
\end{equation*}%
and%
\begin{equation*}
E_{2}=-\partial _{t}.
\end{equation*}%
Thus, it is easy to see that%
\begin{equation}
\widetilde{\nabla }_{T}E_{2}=\widetilde{\nabla }_{\sqrt{2}\partial
_{x_{1}}}\left( -\partial _{t}\right) =-\sqrt{2}\widetilde{\nabla }%
_{\partial _{x_{1}}}\partial _{t}=-\sqrt{2}\left. \cot \left( t\right)
\right\vert _{\gamma }\partial _{x_{1}}=-\sqrt{2}\partial _{x_{1}}.
\label{*}
\end{equation}%
From the Frenet formulas, we know that%
\begin{equation}
\widetilde{\nabla }_{T}E_{2}=-\kappa _{1}T+\kappa _{2}E_{3}=-\sqrt{2}%
\partial _{x_{1}}+\kappa _{2}E_{3}.  \label{**}
\end{equation}%
Finally, equations (\ref{*}) and (\ref{**}) give us $\kappa _{2}=0$. We have
shown that $\gamma $ is a circle in $I\times _{f}S^{2}\left( 1\right) $
satisfying Corollary \ref{cor1}. So, it is biharmonic. Moreover, notice that
$E_{2}\parallel \partial _{t}$. So, we obtained an example for Case III,
too. Let us show that Corollary \ref{cor4} is verified. Since $f\left(
t\right) =\sin t,$ we can write%
\begin{equation*}
\left. \frac{f^{\prime \prime }}{f}\right\vert _{\gamma }=\left. \left(
\frac{-\sin t}{\sin t}\right) \right\vert _{\gamma }=-1
\end{equation*}%
and%
\begin{equation*}
\left. \frac{f^{\prime }}{f}\right\vert _{\gamma }=\left. \left( \frac{\cos t%
}{\sin t}\right) \right\vert _{\gamma }=\cot \left( \frac{\pi }{4}\right) =1.
\end{equation*}%
As a result, we obtain%
\begin{equation*}
k_{1}=\sqrt{\left. \frac{-f^{\prime \prime }}{f}\right\vert _{\gamma }}%
=-\varepsilon \left. \frac{f^{\prime }}{f}\right\vert _{\gamma }=1,
\end{equation*}%
which is true for $\gamma ,$ a circle with $E_{2}\parallel \partial _{t}$.
Thus, Corollary \ref{cor4} is also verified and $\gamma $ is biharmonic. To
summarize, we write:

\begin{example}
$\gamma :J\rightarrow I\times _{f}S^{2}\left( 1\right) ,$
\begin{equation*}
s\mapsto \gamma \left( s\right) =\left( \frac{\pi }{4},\sqrt{2}s,\pi \right)
\end{equation*}%
is a biharmonic cirle in the warped product manifold $I\times
_{f}S^{2}\left( 1\right) ,$ where $f(t)=\sin t$ is the warping function.
\end{example}

Finally, we will give another non-trivial example in $I\times
_{f}S^{2}\left( 1\right) $ with $f(t)=\sin t$.

\begin{example}
Let $a,b\in
\mathbb{R}
$ satisfying $a^{2}+b^{2}=1$. Then $\gamma :J\rightarrow I\times
_{f}S^{2}\left( 1\right) ,$
\begin{equation*}
s\mapsto \gamma \left( s\right) =\left( \arccos \left( \cos \left( as\right)
\cos \left( bs\right) \right) ,\arcsin \left( \frac{\cos \left( as\right)
\sin \left( bs\right) }{\sqrt{1-\cos ^{2}\left( as\right) \cos ^{2}\left(
bs\right) }}\right) ,bs\right)
\end{equation*}%
becomes a unit-speed helix of order $3$ $\left( k_{3}=0\right) $ with
constant curvatures $k_{1}=\left\vert \sin \left( 2u\right) \right\vert $
and $k_{2}=\left\vert \cos \left( 2u\right) \right\vert $, where $u\in
\mathbb{R}
$ is a constant such that $a=\cos u$ and $b=\sin u$. In this case, we have $%
k_{1}^{2}+k_{2}^{2}=1$. In view of Corollary \ref{cor1}, the equation%
\begin{equation*}
f(t)=\frac{1}{\sqrt{k_{1}^{2}+k_{2}^{2}}}\sin \left( \sqrt{%
k_{1}^{2}+k_{2}^{2}}t+c_{0}\right)
\end{equation*}%
is satisfied along the curve $\gamma $ with $c_{0}=0.$ So, $\gamma $ is
biharmonic.
\end{example}

\section{Conclusion and Discussion}

In this paper, we study biharmonic curves in warped product manifolds of the
form $I\times _{f}M^{n}(c)$. By deriving the biharmonic equation and
analyzing the solutions, we identified key geometric properties such as
slant characteristics.
This study suggests potential directions for future research, inspired by
previous works, particularly in extending the results to more general warped
product manifolds or higher-dimensional spaces.

\end{document}